\newcommand{\N}{\mathbb{N}}
\newtheorem{thm}{Theorem}[section] 
\newtheorem{lem}[thm]{Lemma}
\newtheorem{prop}[thm]{Proposition}
\theoremstyle{definition}
\newtheorem{defn}[thm]{Definition}
\theoremstyle{remark}
\newtheorem{example}[thm]{Example}
\newcommand{\ideal}[1]{\langle #1 \rangle}
\newcommand{\QQ}{\mathbb{Q}}
\newcommand{\RR}{\mathbb{R}}
\newcommand{\ZZ}{\mathbb{Z}}
\newcommand{\PP}{\mathbb{P}}
\newcommand{\CC}{\mathbb{C}}
\newcommand{\kk}{k}
\newcommand{\ccc}{c}
\newcommand{\rrr}{r}
\newcommand{\nnn}{n}
\newcommand{\mmm}{m}
\newcommand{\qqq}{Q}
\newcommand{\III}{I}
\newcommand{\PPP}{P}
\newcommand{\GGG}{G}
\newcommand{\HHH}{H}
\newcommand{\XXX}{X}
\newcommand{\YYY}{Y}
\newcommand{\boldp}{\bm{p}}
\newcommand{\boldq}{\bm{q}}
\newcommand{\cg}{c}
\newcommand{\lcm}{\text{\textup{lcm}}}
\newcommand{\groebner}{Gr\"obner }
\newcommand{\lcu}[1]{\textsc{lc}(#1)}
\newcommand{\sing}[1]{\text{\textup{Sing}}(#1)}
\newcommand{\barI}{\bar{I}}
\newcommand{\jac}[1]{\text{Jac}(#1)}
\newcommand{\LM}[1]{\textsc{lm}(#1)}
\newcommand{\Iexp}{I_{\text{exp}}}
\title{The integral closure of a primary ideal is  not always primary}
\author{Nan Li, Zijia Li, Zhi-Hong Yang, and Lihong Zhi}
\keywords{Integral Closure,  Whitney  Stratification, Zariski
Equisingularity,  Embedded Primes, Monomial Ideals}
\subjclass[2010]{13B22, 32S15, 32S60}
\begin{document}

\begin{abstract}
In 1936, Krull asked if the integral closure of a primary ideal is
still primary. Fifty years later, Huneke partially answered this question by giving a primary polynomial ideal whose integral closure is not primary in a regular local ring of characteristic $p=2$. We provide counterexamples to Krull's question regarding polynomial rings with any characteristics. We also find that the Jacobian ideal $J$ of the polynomial $f = x^6 + y^6 + x^4 z t + z^3$ given by  Brian{\c{c}}on and Speder in 1975 is a counterexample to Krull's question. Let $V_1$ be the hypersurface defined by $f = 0$ and $V_2$ be its singular locus.
Brian\c{c}on and Speder proved that  Whitney equisingularity does not imply Zariski equisingularity by showing that the pair $(V_1 \setminus V_2,\ V_2)$  satisfies Whitney's conditions around the origin but fails Zariski's equisingular conditions. 
  We discover that the pair $(V_1 \setminus V_2,\ V_2)$ fails Whitney's conditions at the variety of the embedded prime of the integral closure $\bar{J}$, which means that $V_1$ is not Whitney regular along $V_2$. Moreover, we also show that Whitney stratification of this hypersurface is different from the stratification of isosingular sets given by Hauenstein and Wampler, which is related to Thom-Boardman singularity.

\end{abstract}

\maketitle

\setcounter{tocdepth}{1}

\section{Introduction}\label{sec:1}
Krull \cite[p. 577]{krull1936beitrage} asked: {\textit{Ist etwa bei
einem Prim\"arideal $q$ immer auch $q_b$ Prim\"arideal?}}  For monomial ideals, the answer to Krull's question is yes.
The integral closure of a primary monomial ideal is
always primary \cite{salam2002integral}. However, for non-monomial ideals,
Huneke partially answered this question by giving a counterexample
 in the regular local ring
$k[[x,y,z]]$ with char$(k) = 2$ \cite[Example~3.7]{huneke1986primary}. According to \cite{huneke2006integral},
there are no known counterexamples for rings of
characteristic zero.  The integral closure of ideals is related to Whitney equisingularity. For instance,  Teissier \cite{FloresTeissier2018,teissier88limites} gave an algebraic description for Whitney's condition (b)   using the integral closure of the sheaf of ideals, which started the modern equisingularity theory. Gaffney
\cite{gaffney1992integral,gaffney1996multiplicities,gaffney1999specialization} generalized the theory of integral closure of ideals to modules, and made many applications in Whitney equisingularity.

Our main contributions are summarized below:
\begin{itemize}
\item
We answer Krull's question negatively by giving    a sequence of primary ideals
\[\III=\langle x^3, y^3, x^2y, x^2z^n-xy^2 \rangle, n \in \ZZ_+\]
whose integral closures
\[\barI = \ideal{x^3, y^3, x^2y,
x^2 z^\nnn, xy^2 }\]
 are not primary over a field of characteristic zero or positive characteristics.
 Hence,  taking integral
closure of a polynomial primary ideal may create embedded primes. On the other
hand, we also show that there are examples where the given polynomial ideal is not
primary but its integral closure is primary. It implies that taking
integral closure may also remove embedded primes. Therefore,   the
relation between a primary ideal and its integral closure is not clear yet.

\item  We show that the hypersurface defined by $f(x,y,z,t) =  x^6 + y^6 + x^4 z t + z^3=0$   is not Whitney regular along its singular locus. 
This hypersurface was given by Brian{\c{c}}on and Speder \cite{BriSpe1975} to show that  Whitney equisingularity  does not imply  Zariski equisingularity for the set germs at the origin. 
Let  $V_1$ be the hypersurface defined by  $f$, $V_2 = \sing{V_1} = \{(0,0,0,t)\mid t\in \CC\}$ be the singular locus of $V_1$. 
It is known   that the pair $(V_1\setminus V_2,\ V_2)$ satisfies Whitney's conditions (a) and (b) but fails
Zariski equisingularity  at the origin~\cite{BriSpe1975,ParPau2017,wall10gaffney, zariski1977elusive}, which makes the hypersurface $V_1$ as a counterexample for Zariski equisingularity problem \cite{villamayor2000equiresolution,zariski1971some}. 
 We consider Whitney equisingularity for all points of $V_2$ and show that the pair $(V_1\setminus V_2,\ V_2)$ fails Whitney's conditions at $V_3= \{(0,0,0,t)\mid 4t^3+27=0 \}\subset V_2$.
The Jacobian ideal  generated by the partial derivatives of $f$ is primary, but  its integral closure is not, which  gives another  counterexample to Krull's question.  The integral closure of  the Jacobian ideal of $f$ has an embedded prime which happens to be the vanishing ideal of $V_3$.  Furthermore, we also show that Whitney stratification of $V_1$ is different from the stratification given by isosingular sets in \cite{HW2013}.

\end{itemize}

The paper is organized as follows. Section~\ref{sec:basic} is for
basic definitions and properties of integral closures of ideals. In
Section~\ref{sec:examples}, we  show  a sequence of primary ideals  whose  integral closures  are not primary over a field of characteristic zero or positive characteristics. Moreover, we also present an example to show that taking integral
closure may remove embedded primes.  Finally, we compute the integral closure $\bar{J}$  of the Jacobian ideal of $f$ and verify that the pair $(V_1\setminus V_2, V_2)$ fails Whitney's conditions at the variety of the embedded prime of $\bar{J}$.



\section{Basic properties}
\label{sec:basic}
Let us first recall some basic definitions from \cite{huneke2006integral}.
\begin{defn}
 Let $I$ be an ideal in a ring $R$. An element $r \in R$ is said to be integral over $I$ if there
exists 
an integer $n$ and elements $a_i \in I^i$, $i = 1,\ldots, n,$
such that
\begin{equation*}
 r^n+a_1r^{n-1}+a_2r^{n-2}+\cdots+a_{n-1}r+a_n=0.
\end{equation*}
\end{defn}
The set of all integral elements over $I$ is called the \emph{integral
closure} of $I$ and is denoted by $\barI$. If $I = \barI$, then $I$
is called integrally closed. Let $J$ be an ideal satisfying $I \subset J$, we say that
$J$ is integral over $I$ if
$\barI \subset J$. 


\begin{defn}
Let $R$ be the polynomial ring $\kk[x_1,\ldots,x_d]$. For any monomial
$m = x_1^{n_1}x_2^{n_2}\cdots x_d^{n_d}$, its exponent vector is $(n_1,\ldots,n_d) \in \N^d$. For any monomial ideal $I$, the set of all exponent vectors of all the monomials in
$I$ is called the \emph{exponent set} of I.
\end{defn}
The integral closure of a monomial ideal in a polynomial ring is still
a monomial ideal \cite[Proposition~1.4.2]{huneke2006integral}. 
The following proposition is useful for computing the integral closure
of a monomial ideal. 
\begin{prop}\cite[Proposition~1.4.6]{huneke2006integral} 
\label{prop:mo}
The exponent set of the integral closure of a monomial ideal $I$
equals all the integer lattice points in the convex hull of the
exponent set of $I$.
\end{prop}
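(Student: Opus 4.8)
The plan is to exploit the fact, recalled just above, that the integral closure $\overline{I}$ of a monomial ideal $I$ is again monomial, so that it suffices to decide exactly which monomials $x^\alpha$ lie in $\overline{I}$. Writing $E \subseteq \N^d$ for the exponent set of $I$ (an \emph{upward closed} set, meaning $\alpha \in E$ and $\beta \in \N^d$ imply $\alpha + \beta \in E$), I would reduce the whole statement to the following monomial membership criterion: $x^\alpha \in \overline{I}$ if and only if $x^{n\alpha} \in I^n$ for some integer $n \ge 1$. Both inclusions claimed in the proposition then fall out of convexity once this criterion is available.

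For the criterion itself, the easy implication is that $x^{n\alpha}\in I^n$ makes $x^{n\alpha}$ a legitimate constant term: the relation
\[
(x^\alpha)^n + 0\cdot(x^\alpha)^{n-1} + \cdots + 0\cdot x^\alpha - x^{n\alpha} = 0,
\]
whose final coefficient $-x^{n\alpha}$ lies in $I^n$, is an integral dependence equation for $x^\alpha$ over $I$. The reverse implication is the first place requiring care. Starting from an integral equation $x^{n\alpha} + \sum_{i=1}^n a_i x^{(n-i)\alpha} = 0$ with $a_i \in I^i$, I would track the monomial $x^{n\alpha}$: in the product $a_i x^{(n-i)\alpha}$ it can only be produced by a monomial $x^{i\alpha}$ occurring in $a_i$, so comparing the coefficients of $x^{n\alpha}$ forces $x^{i\alpha}$ to appear in some $a_i$ with nonzero coefficient. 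Because $I^i$ is itself a monomial ideal, every monomial occurring in $a_i$ lies in $I^i$; hence $x^{i\alpha}\in I^i$, which is exactly the criterion with $n = i$.

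With the criterion in hand I would identify the exponent set of $I^n$ with the $n$-fold Minkowski sum of $E$, i.e.\ $x^{n\alpha}\in I^n$ iff $n\alpha = \gamma_1 + \cdots + \gamma_n$ for some $\gamma_j \in E$, where the upward-closedness of $E$ lets one absorb any excess into a single summand. Dividing by $n$ exhibits $\alpha = \tfrac1n\sum_j \gamma_j$ as a convex combination of points of $E$, so $x^\alpha \in \overline{I}$ implies $\alpha \in \mathrm{conv}(E) \cap \N^d$. This settles the inclusion of the exponent set of $\overline{I}$ into the lattice points of the convex hull.

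The remaining inclusion is where I expect the main obstacle to sit. Given a lattice point $\alpha \in \mathrm{conv}(E)\cap\N^d$, I must produce a single $n$ for which $n\alpha$ is an $n$-fold sum of points of $E$. Here I would use that $\mathrm{conv}(E)$ is a rational polyhedron, namely the convex hull of the finitely many minimal-generator exponents together with the positive orthant, so the rational point $\alpha$ admits a rational convex representation $\alpha = \sum_i \lambda_i \delta_i$ with $\delta_i \in E$. Clearing denominators, $\lambda_i = p_i/q$ with $\sum_i p_i = q$, yields $q\alpha = \sum_i p_i \delta_i$, a sum of exactly $q$ elements of $E$; thus $x^{q\alpha}\in I^q$ and therefore $x^\alpha\in\overline{I}$ by the criterion. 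The genuinely delicate point throughout is this passage from a real convex combination to a common integer denominator, which is precisely what the rationality of the Newton polyhedron supplies.
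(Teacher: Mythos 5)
The paper offers no proof of this proposition at all --- it is quoted directly from Huneke--Swanson \cite[Proposition~1.4.6]{huneke2006integral} --- so the only meaningful comparison is with the standard argument in that cited source, which your write-up essentially reproduces: reduce to monomials via the fact that $\overline{I}$ is again monomial, establish the criterion $x^\alpha\in\overline{I}$ if and only if $x^{n\alpha}\in I^n$ for some $n\ge 1$ by tracking the coefficient of $x^{n\alpha}$ in an integral dependence equation, and then translate membership $x^{n\alpha}\in I^n$ into convex combinations of exponent vectors in both directions. Your proof is correct; the one step you assert rather than fully prove --- that an integral point of $\mathrm{conv}(E)$ admits a \emph{rational} convex representation by points of $E$ --- is indeed the standard rationality fact (for instance, by Carath\'eodory the representations using finitely many fixed points of $E$ form a nonempty rational polytope in the coefficients $\lambda_i$, which therefore contains a rational point), so flagging it as the delicate point and resolving it by the rationality of the Newton polyhedron is exactly right.
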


We give a simple example to show how to 
compute the integral closure of a
monomial ideal using Proposition~\ref{prop:mo}. 
\begin{example}
Let $R=\kk[x,y]$ be a polynomial ring over a field $\kk$. Let $I=\langle
x^2, y^2 \rangle$ be a monomial ideal in $R$.
The exponent set of $I$ is $\Iexp = \{(n_1, n_2) \in \N^2 \mid
n_1 \ge 2, n_2 \ge 2 \}$, see Figure~\ref{fig:exponent}(a). The
convex hull of $\Iexp$ is $\Iexp \cup \{(1,1)\}$, see
Figure~\ref{fig:exponent}(b).
Therefore, by
Proposition~\ref{prop:mo}, 
the integral closure of $I$ is $\barI=\langle x^2,y^2,xy\rangle$.
\begin{figure}[tbhp]
\centering
\subfigure[]{\includegraphics[width=0.3\textwidth]{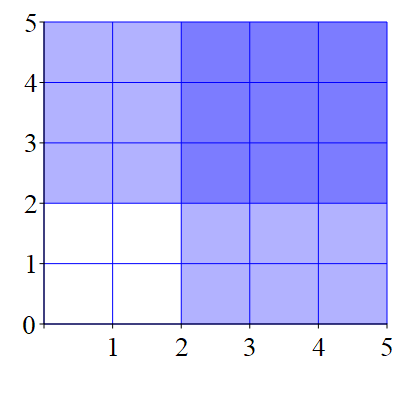}}
\hspace*{2em}
\subfigure[]{\includegraphics[width=0.3\textwidth]{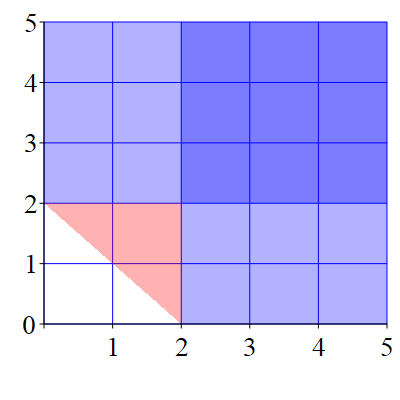}}
\caption{(a) Exponent set of $I$. (b) Exponent set of $\barI$.}
\label{fig:exponent}
\end{figure}
\end{example}

\section{Counterexamples to Krull's question}
\label{sec:examples}

In this section, we answer Krull's question negatively by giving a sequence of primary ideals  whose  integral closures 
 are not primary. The most amazing thing for us to notice is that the  Jacobian ideal of the ideal defined by $f(x,y,z,t) =  x^6 + y^6 + x^4 z t + z^3$
 has already given another counterexample to Krull's question. 
%

\subsection{ A set of counterexamples to Krull's question}                                                   

\begin{example}
\label{ex:counter_ex}

Let $R = k[x,y,z]$ be a polynomial ring over an arbitrary field $k$.
Let 
\begin{equation*}
\III=\langle x^3, y^3, x^2y, x^2z-xy^2 \rangle,
\end{equation*}
where $\III$ is a
primary ideal and its radical ideal is $\PPP=\sqrt{\III}=\langle x, y
\rangle$.
\end{example}

Let $\barI$ be the integral closure of $\III$. 
Using Proposition~\ref{prop:mo}, we obtain
\[\barI = \langle x^3, y^3, x^2y, x^2z, xy^2 \rangle.\]
Its  primary decomposition
is
 \[\barI = \langle x^2, y^3, xy^2 \rangle   
 \cap \langle x^3,y,z \rangle,\]                       
which implies that $\barI$ is not primary. One can also check it via Macaulay2, see Appendix~\ref{app:m1}.

It is interesting to notice that a sequence of counterexamples exist according to the following theorem.

\begin{thm}
\label{cor:ex_for_n}
 Let $n$ be a positive integer, and
 \[\III=\langle x^3, y^3, x^2y, x^2z^n-xy^2 \rangle, ~n \in \ZZ_+.\]
 be a polynomial ideal in $\kk[x,y,z]$.  Then   the ideal $\III$
is primary and its integral closure
\[\barI = \ideal{x^3, y^3, x^2y,
x^2 z^\nnn, xy^2 }\]
 is not primary.

\end{thm}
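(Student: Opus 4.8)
My plan is to prove the two claims in turn: that $\III$ is $\PPP$-primary with $\PPP=\langle x,y\rangle$, and that its integral closure is a monomial ideal carrying an embedded prime. First I record that $\sqrt{\III}=\langle x,y\rangle=:\PPP$, since every generator lies in $\langle x,y\rangle$ while $x^3,y^3\in\III$ give $x,y\in\sqrt{\III}$. The heart of the primality argument is to compute a \groebner basis of $\III$ for a monomial order in which $z$ is cheapest — concretely, the block (product) order that compares monomials first by their total degree in $x,y$, breaking ties by degrevlex in $x,y$ and then by the exponent of $z$. Under this order the leading term of $x^2z^n-xy^2$ is $xy^2$ \emph{for every} $n$, and a routine S-polynomial check shows that $\{x^3,\,y^3,\,x^2y,\,xy^2-x^2z^n\}$ is already a \groebner basis, with leading ideal $\langle x^3,y^3,x^2y,xy^2\rangle$ independent of $n$. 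The standard monomials are then exactly the $x^iy^jz^\ell$ with $x^iy^j\in\{1,x,x^2,y,xy,y^2\}$ and $\ell\ge 0$, so $R/\III$ is a \emph{free} $k[z]$-module on the six elements $1,x,x^2,y,xy,y^2$; in particular it is torsion-free over $k[z]$.

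From freeness I conclude primality as follows. Every associated prime of $R/\III$ contains the unique minimal prime $\PPP$, hence is either $\PPP$ or of the form $\langle x,y,h(z)\rangle$ with $h\in k[z]$ irreducible. An embedded prime of the latter type would furnish a nonzero element of $R/\III$ annihilated by $h(z)$, i.e. nontrivial $k[z]$-torsion, which freeness forbids. Thus $\PPP$ is the only associated prime and $\III$ is $\PPP$-primary.

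Next I identify the integral closure. I claim $\barI$ equals the monomial ideal $J=\langle x^3,y^3,x^2y,x^2z^n,xy^2\rangle$ (in agreement with the $n=1$ computation above). The inclusion $J\subseteq\barI$ is checked generatorwise: $x^3,y^3,x^2y\in\III$; the relation $(xy^2)^2=(x^2y)(y^3)\in\III^2$ is a monic integral equation for $xy^2$, so $xy^2\in\barI$; and then $x^2z^n=(x^2z^n-xy^2)+xy^2\in\barI$ since $\barI$ is an ideal containing $\III$. For the reverse inclusion, $\III\subseteq J$ gives $\barI\subseteq\overline{J}$, so it suffices to prove $J$ integrally closed, which I do with Proposition~\ref{prop:mo}. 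The exponents $(2,1,0)$ of $x^2y$ and $(1,2,0)$ of $xy^2$ lie on the segment from $(3,0,0)$ to $(0,3,0)$, so the convex hull of the exponent set of $J$ is $\mathrm{conv}\{(3,0,0),(0,3,0),(2,0,n)\}+\R_{\ge 0}^3$. A short case analysis on the $x$-exponent shows that every lattice point dominating this triangle already dominates a generator of $J$; the only delicate candidates are the points $(2,0,c)$ with $c<n$, and these are excluded because dominating the triangle with $x$-exponent $2$ and $y$-exponent $0$ forces $z$-exponent $\ge n$. Hence $\overline{J}=J$ and $\barI=J$.

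Finally, non-primality of $\barI=J$ is immediate from the monomial description: the monomial $m=x^2z^{n-1}$ is not in $J$, yet $z\cdot m=x^2z^n\in J$ while $z\notin\sqrt{J}=\langle x,y\rangle$, which directly violates the definition of a primary ideal (equivalently, $\langle x,y,z\rangle=(J:m)$ is an embedded associated prime). The step I expect to be the main obstacle is the uniform-in-$n$ primality statement: because $x^2z^n-xy^2$ is not homogeneous, the usual degree orders behave differently for $n=1$ and $n\ge 2$, and the decisive trick is to pick the $z$-cheapest order that freezes the leading ideal and lays bare the $k[z]$-free structure of $R/\III$. Once that structure is available, both the absence of embedded primes for $\III$ and the polyhedral check for $\barI$ are routine, and the entire argument is insensitive to the characteristic of $k$, as the theorem requires.
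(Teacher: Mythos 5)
Your proposal is correct, and it reaches the same three conclusions as the paper (note that you prove $\overline{I}=\langle x^3,y^3,x^2y,x^2z^n,xy^2\rangle$, which is what the paper's own proof establishes; the ``$x^2$'' in the theorem statement is a typo, and with $x^2$ the ideal would in fact be primary). The genuine difference is in the primality argument. The paper first invokes its appendix machinery (Proposition~\ref{prop:pseudo_primary} of Shimoyama--Yokoyama and Proposition~\ref{prop:beckerwei} of Becker--Weispfenning, packaged as Lemma~\ref{lem:saturation}) to identify the $\langle x,y\rangle$-primary component of $I$ with the saturation $I:z^\infty$, and then runs an explicit contradiction argument: a lex Gr\"obner basis computation with $x>y>z$, division of $z^m g$ by $x^2z^n-xy^2$, and a normal-form analysis forcing $g=0$. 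You instead choose a block order in which the $\{x,y\}$-part dominates, so that the initial ideal $\langle x^3,y^3,x^2y,xy^2\rangle$ is independent of $n$; this exhibits $R/I$ as a free $k[z]$-module of rank six, and torsion-freeness immediately rules out any embedded prime $\langle x,y,h(z)\rangle$, since such a prime would be the annihilator of a nonzero torsion element. Your route is self-contained (no appendix needed), uniform in $n$ by construction, and yields structural information about $R/I$; the paper's route makes the saturation $I:z^\infty$ explicit, which mirrors how such primary components are actually computed. For the integral closure the two arguments are close: both show $xy^2\in\overline{I}$ and then apply Proposition~\ref{prop:mo} to see that the resulting monomial ideal is integrally closed, but your monic equation $(xy^2)^2-(x^2y)(y^3)=0$ is more elementary than the paper's convex-hull argument for $\langle x^3,y^3\rangle$, and your case analysis on lattice points (with the delicate points $(2,0,c)$, $c<n$) supplies detail the paper leaves implicit. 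The non-primality step is essentially identical, differing only in the chosen factorization $x^2z^{n-1}\cdot z$ versus the paper's $x^2\cdot z^n$.
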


 Theorem \ref{cor:ex_for_n} follows from  three claims below.   Some  preliminary results
from \cite{BeckerWei1993,shimoyama96} 
 are summarized in
Appendix~\ref{sec:append}.

\begin{itemize}
\item
{\itshape Claim~1.} The ideal $\III = \ideal{x^3, y^3,
x^2y,x^2z^n-xy^2 } \subset \kk[x,y,z]$ is primary, where $n\in
\ZZ_{+}$.

{\itshape Proof of Claim~1.}
Let $\PPP=\sqrt{\III} = \ideal{x,y}$ and $\qqq$ be the primary
component of $\III$ with $\sqrt{\qqq}=\PPP$.
By Lemma~\ref{lem:saturation},
$\qqq = \III : z^\infty$. 
We prove that $\III=\qqq$.

Since $\III\subset\qqq$, we only need to prove that
$\qqq\subset\III$. Suppose that $\III \subsetneq \qqq$, and let
$g\in \qqq$ and $g\notin \III$.
Then $z^\mmm g \in \III$ for some $\mmm\ge 1$.
The set of generators
$ \GGG = \{x^3, y^3,x^2y,x^2z^n-xy^2 \}$ 
is a Gr\"obner basis of $\III$ with respect to the
lexicographic order $x>y>z$.
Without loss of generality, we assume
that $g$ is reduced with respect to $\GGG$.
We may also assume that $\mmm \ge n$, because  
$z^\mmm g\in \III$ implies that  
$z^{\mmm+\ell} g \in \III$ for any $\ell \ge 0$.   
Therefore, we can choose an exponent not less than $n$. 
Let $\LM{G}=\{x^3, y^3, x^2y, x^2z^n \}$ be the set of leading 
monomials of polynomials in $G$.
Since $\GGG$ is a \groebner basis of $\III$,
$z^\mmm\LM{g}\in \ideal{\LM{G}} =
\ideal{x^3, y^3, x^2y, x^2z^n }$, 
which implies that either $x^2$ or $y^3$ divides $\LM{g}$.
On the other hand, $\LM{g}$ is not
divisible by any element in $\LM{G}$
because $g$ is reduced with respect to $\GGG$.
Therefore, $\LM{g}=x^2 z^{n-\ell}$ for some $1\le\ell\le n$,
and so
\[g = \cg x^2 z^{n-\ell} + x g_1(y,z) + g_0(y,z)\]
for some 
$\cg \in \kk\setminus \{0\}$ 
and $g_1(y,z), g_0(y,z)\in \QQ[y,z]$. 
Let $\rrr$ be the remainder on division of $z^\mmm g$ by the
polynomial $x^2 z^n - xy^2 $, 
\begin{equation*}
\begin{split}
\rrr
=\ & z^\mmm g - (x^2 z^n -xy^2) \cg z^{\mmm-\ell}\\  
=\ & z^\mmm\big(\cg x^2 z^{n-\ell} + x g_1(y,z) + g_0(x,y)\big)  
- (x^2 z^n - xy^2) \cg z^{\mmm-\ell}\\  
=\ & z^\mmm  ( x g_1(y,z)  +  g_0(y,z) ) 
+  \cg xy^2 z^{\mmm-\ell}. 
\end{split} 
\end{equation*} 
As $\deg_x(\rrr) < 2 $ and $\deg_y(\rrr) < 3$, no term of
$\rrr$ is divisible by any element in $\LM{G}$, which means
$\rrr$ is the normal form of $z^\mmm g $ with respect to
$\GGG$. Recall that $z^\mmm g \in \III$, we have $\rrr = 0$,
which leads to $\cg=0$ and
$x g_1 + g_0 = 0$, 
and so $g=0$. This is a
contradiction.
Claim~1 is proved.
\hfill$\Box$
\newline

\item
{\itshape Claim~2.} For $n\in \ZZ_{+}$, the integral closure of the ideal
$\III = \ideal{x^3, y^3, x^2y,x^2z^n-xy^2 } \subset \kk[x,y,z]$
is $\barI 
= \ideal{x^3, y^3, x^2 y, x^2 z^n,
xy^2}$.

{\itshape Proof of Claim~2.}
Let $\III_1 = \ideal{x^3, y^3} \subset \III$, then by
Proposition~\ref{prop:mo}, the monomial $xy^2$ is in the
integral closure of $\III_1$, and so
$xy^2$ is in the integral closure of $\III$.
Therefore $\III + \ideal{xy^2} \subset \barI$. 
On the other hand,
$\III + \ideal{xy^2}=\ideal{x^3, y^3, x^2 y, x^2 z^n, xy^2 }$ and it is integrally closed
according to Proposition~\ref{prop:mo}, which leads to
$\III + \ideal{xy^2}=\barI$. 
Claim~2 is proved.
\hfill$\Box$
\newline

\item
{\itshape Claim~3.}
The ideal $\barI 
= \ideal{x^3, y^3, x^2 y, x^2 z^n, xy^2 } \subset\kk[x,y,z]$ is not
primary, where $n\in \ZZ_{+}$.

{\itshape Proof of Claim~3.}
Because
$x^2 z^n \in \barI$ 
while $x^2 \notin \barI $ and 
$(z^n)^{\mmm}\notin \barI$ 
for any $\mmm\in \ZZ_{+}$,
the ideal $\barI$ is not primary. Claim~3 is proved.
\hfill$\Box$
\end{itemize}


In contrast, 
there exist non-primary ideals whose integral closures are primary. For instance,
\[I=\langle x^2, y^2, xyz \rangle=\langle x^2,xy,y^2 \rangle   
 \cap \langle x^2,y^2,z\rangle\] 
 is not primary but its integral closure
 \[\barI = \langle x^2,y^2,xy \rangle\]
 is primary using Proposition~\ref{prop:mo}. 



\subsection{Whitney stratification of the  hypersurface
defined by $ f=x^6 + y^6 + x^4 z t + z^3$}\label{sec:moreexamples}
\quad\quad
\vspace{2.4ex}

The following example was given in \cite{BriSpe1975} by Brian{\c{c}}on and Speder to show that Whitney equisingularity does not imply Zariski equisingularity. 
The Jacobian ideal $J$ of $f$  is primary and  its integral closure $\bar{J}$ of $J$ is not, which  gives another  counterexample to Krull's question.
Moreover, the embedded prime of $\bar{J}$ happens to be the vanishing ideal of the points where Whitney equisingularity fails.


\begin{example}
\label{ex:original_whit}
Let $f = x^6 + y^6 + x^4 z t + z^3 \in \QQ[x,y,z,t] $, and its Jacobian ideal
\[J=\langle x^4 t + 3z^2,\ x^4z,\ y^5,\ 3x^5 + 2 x^3 z t \rangle.\]
 We first show that $J$ is a
primary ideal, while the integral closure
\begin{align*}
 \bar{J}  = \langle 3 x^2 y z+2 y z^2 t,3 x^3 z+2 x z^2 t,x^4 t+3 z^2,y^4 z,x^4 z,y^5,3 x^2 y^3+2 y^3 z t,3 x^3 y^2+2 x y^2 z t,\\
  9x^4 y-4 y z^2 t^2,3 x^5+2 x^3 z t,x^3 y z t,x^4 y^2,4 y^3 z t^3+27 y^3 z,x y^3 z t^2,2 x^3 y^2 t^2-9 x y^2 z,4 x y^4 t^3+27 x y^4  \rangle
\end{align*}
 is not primary. Its associated primes are
 \[\langle z,\ y,\ x \rangle ~\mbox{and}~ \langle z,\ y,\ x,\ 4t^3  + 27 \rangle.\]
  One can verify the result by Macaulay2, see Appendix~\ref{app:m3}.

Before we show that  the hypersurface defined by $f=0$ is not Whitney regular along its singular locus. Let us  recall the
definitions of Whitney's conditions~(a) and (b).

Let $\XXX$ and $\YYY$ be two smooth manifolds in $k^n$, $k=\RR$ or $\CC$. Suppose $\XXX \cap
\YYY = \emptyset$ and $\YYY$ is contained in the closure of $\XXX$.
Let $\boldp$ be a point in $\YYY$.
\begin{itemize}
\item
The  pair $(\XXX, \YYY)$ is said to satisfy
Whitney's condition~(a) at $\boldp$ if for any sequence
$\boldp_\epsilon\in \XXX$, $\boldp_\epsilon \rightarrow \boldp$
and $T_{\boldp_\epsilon}\XXX \rightarrow T$, then
$T_{\boldp}\YYY \subset T$.

\item
The  pair $(\XXX, \YYY)$ is said to satisfy
Whitney's condition~(b) at $\boldp$ if for any sequences
$\boldp_\epsilon \in \XXX$ and $\boldq_\epsilon \in \YYY$
such that
$\boldp_\epsilon\rightarrow \boldp$,
$\boldq_\epsilon\rightarrow \boldq$,
$T_{\boldp_\epsilon}\XXX \rightarrow T $,
and the lines $\overline{\boldp_\epsilon \boldq_\epsilon}$
converges to a line $\ell$ in the projective space $\PP^{n-1}$,
then $\ell \subset T$. Here $\overline{\boldp_\epsilon
\boldq_\epsilon}$ denotes the unique line through the two points
$\boldp_\epsilon$ and $ \boldq_\epsilon$.
\newline
\end{itemize}

Back to Example~\ref{ex:original_whit},
recall that $f = x^6 + y^6 + x^4 z t + z^3 \in \QQ[x,y,z,t]$ and the
Jacobian ideal of $f$ is
$J = \ideal{x^4 t + 3z^2,\ x^4z,\ y^5,\ 3x^5 + 2 x^3 z t}$.
The radical ideal of $J$ is $\sqrt{J}=\ideal{x,y,z}$.
Let $V_1 $ be the hypersurface  defined by  $f=0$ and $V_2$
be its singular locus. Then  we have
\begin{equation*}
V_2= \{(0,0,0,t)\mid t\in \CC \}\subset\CC^4.
\end{equation*}

We compute the set of points where
the pair $(V_1\setminus V_2,\ V_2)$  does
not satisfy Whitney's conditions
using the criterion in \cite[Lemma~2.8]{DJ2021}.
 We discover three such points defined by the variety
\begin{equation}\label{defv3}
V_3 =  \left\{(0,0,0,t) \in \CC^4\mid
4t^3 + 27 = 0 \right\}.
\end{equation}
These  points can also be found by running algorithms in  \cite{HelmerWhitStrat,HN2022}. 
The following is a formal proof for the statement.

\begin{thm}
The pair $(V_1\setminus V_2,\ V_2)$ does not  satisfy
Whitney's  condition~(a) and (b) at the points in $V_3$.
\end{thm}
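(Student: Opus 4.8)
The plan is to verify Whitney's conditions (a) and (b) fail at the three points of $V_3$ by an explicit limiting computation along a well-chosen approach sequence in $V_1\setminus V_2$, rather than relying solely on the cited criteria. Fix one of the distinguished values $t_0$ with $4t_0^3+27=0$, so that $\boldp=(0,0,0,t_0)\in V_3$. First I would parametrize a sequence $\boldp_\epsilon=(x_\epsilon,y_\epsilon,z_\epsilon,t_\epsilon)\in V_1\setminus V_2$ converging to $\boldp$, and for each $\boldp_\epsilon$ write down the tangent plane $T_{\boldp_\epsilon}(V_1\setminus V_2)$ as the hyperplane orthogonal to the gradient $\nabla f(\boldp_\epsilon)=(6x^5+4x^3zt,\,6y^5,\,x^4t+3z^2,\,x^4z)$. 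Since $V_1$ is a hypersurface, its tangent plane is the kernel of $\nabla f$, so the whole problem reduces to understanding the limiting direction of $\nabla f(\boldp_\epsilon)$, or equivalently the limit of the normalized gradient in $\PP^3$.

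The key structural observation, already encoded in the Jacobian ideal computation, is that the distinguished value $t_0$ is exactly where the integral-closure generators carrying the factor $4t^3+27$ vanish; this is the algebraic shadow of a failure of the gradient direction to stabilize. Concretely, I would choose $\boldp_\epsilon$ approaching along a curve where $x_\epsilon,y_\epsilon,z_\epsilon\to 0$ at comparable rates — the natural scaling is suggested by the quasi-homogeneity of $f$ with weights making $x^6$, $z^3$, and $x^4zt$ balance — so that the four components of $\nabla f$ stay of comparable magnitude after rescaling. Along such a curve the limiting normal direction $T^\perp$ can be computed explicitly, and hence the limiting tangent hyperplane $T=\lim T_{\boldp_\epsilon}(V_1\setminus V_2)$ is determined. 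Condition (a) asks whether $T_{\boldp}V_2\subset T$; since $V_2$ is the $t$-axis, $T_{\boldp}V_2$ is spanned by $(0,0,0,1)$, so (a) holds iff the limiting normal's last coordinate vanishes. I would show that at $t_0$ with $4t_0^3+27=0$ one can select the approach so that the fourth component of the limiting normal is nonzero, i.e. $(0,0,0,1)\notin T$, giving the failure of (a).

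Having established the failure of (a) at the three points, the failure of (b) follows almost immediately: I would take $\boldq_\epsilon=(0,0,0,t_\epsilon)\in V_2$ chosen with $t_\epsilon$ tracking the $t$-coordinate of $\boldp_\epsilon$ so that the secant line $\overline{\boldp_\epsilon\boldq_\epsilon}$ has limiting direction parallel to $T_{\boldp}V_2=(0,0,0,1)$. Then the limit line $\ell$ coincides with $T_{\boldp}V_2$, and since condition (a) already fails at $\boldp$, we have $\ell=T_{\boldp}V_2\not\subset T$, which is exactly the failure of (b). Alternatively, and more in the spirit of the citations invoked in the excerpt, I would invoke the criterion of \cite[Lemma~2.8]{DJ2021} for (b) and the conormal criterion of \cite[p.~735]{FloresTeissier2018} for (a), both of which detect precisely the embedded prime $\ideal{z,y,x,4t^3+27}$ appearing in the associated primes of $\bar J$; this identifies $V_3$ as the locus where the stratification condition degenerates and makes the connection to the integral-closure counterexample explicit.

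The main obstacle I anticipate is the bookkeeping in choosing the approach curve so that all four gradient components remain comparable in the limit: a careless choice (for instance letting $z_\epsilon$ go to zero faster than $x_\epsilon^2$) collapses the limiting normal to a coordinate direction and spuriously satisfies (a), masking the failure. The delicate point is that the failure is visible only along approach directions adapted to the weighted-homogeneous structure of $f$ near $t=t_0$, and one must verify that the computed limit is genuinely the relevant limiting tangent — i.e. that the chosen sequence realizes a limit in the Grassmannian that no re-parametrization can avoid. Pinning down this limiting normal direction rigorously, and confirming its fourth coordinate is nonzero exactly when $4t_0^3+27=0$, is the technical heart of the argument.
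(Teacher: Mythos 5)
Your overall strategy is the same as the paper's: approach a point $\boldp=(0,0,0,\xi)\in V_3$ along a curve in $V_1\setminus V_2$, compute $T_{\boldp_\epsilon}V_1$ as the kernel of $\jac{f}$, and show the limiting hyperplane fails to contain the $t$-axis; your direct secant construction for (b) is a harmless variant of the paper's one-line reduction (the paper simply invokes that condition (b) implies condition (a), so failure of (a) kills (b)). But as written the proposal has a genuine gap, which you yourself flag: the whole proof \emph{is} the construction of an approach curve along which the fourth gradient component $f_t=x^4z$ is not dominated by the other three, and you never exhibit such a curve or prove one exists. This is not routine bookkeeping. With the quasi-homogeneous scaling you propose ($x\sim\epsilon$, $z\sim\epsilon^2$, $t\to\xi$), one generically has $f_z=x^4t+3z^2\sim\epsilon^4$ while $f_t\sim\epsilon^6$, so the limiting normal collapses to $(0,0,1,0)$, the limiting tangent is $\{z=0\}\supset V_2$, and condition (a) is \emph{satisfied}. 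Failure requires exact cancellation, to order beyond $\epsilon^6$, in both $f_x=6x^5+4x^3zt$ and $f_z$, simultaneously with $f=0$; the solvability of that system is precisely what distinguishes the three points of $V_3$ from the rest of $V_2$ (where no such curve exists). Deferring this as "the technical heart" means deferring the entire theorem. A smaller but real slip: if $\boldq_\epsilon$ exactly tracks the $t$-coordinate of $\boldp_\epsilon$, the secant direction tends to $(1,0,0,0)\subset T$ and (b) is not violated; you need $t_\epsilon\to\xi$ strictly slower than $\|(x_\epsilon,y_\epsilon,z_\epsilon)\|$.

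For comparison, here is what fills the gap in the paper. Fix $\xi$ with $4\xi^3+27=0$ and choose $\ccc$ with $\ccc^3=1/2$ and $\xi=-3\ccc^2$ (possible for each of the three points of $V_3$), and take the curve $\boldp_\epsilon=(\epsilon,0,\ccc\,\epsilon^2,\xi)$ with \emph{constant} $t$-coordinate. Then $f(\boldp_\epsilon)=(1+\ccc\xi+\ccc^3)\epsilon^6=(1-2\ccc^3)\epsilon^6=0$, and the gradient evaluates identically to $(0,0,0,\ccc\,\epsilon^6)$, since the coefficients $6+4\ccc\xi$ and $\xi+3\ccc^2$ vanish by the choice of constants. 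Hence $T_{\boldp_\epsilon}V_1=\{t=0\}$ for every $\epsilon$, the limit $T=\{t=0\}$ does not contain $T_{\boldp}V_2=V_2$, and (a) fails; the compatibility of the three equations $1+\ccc\xi+\ccc^3=0$, $6+4\ccc\xi=0$, $\xi+3\ccc^2=0$ forces $4\xi^3+27=0$, which is exactly where $V_3$ enters. Note also that one piece of your heuristic points the wrong way: on the correct curve the four gradient components are not "of comparable magnitude" --- the first three vanish identically, and only $f_t$ survives. Your fallback of citing the criteria of \cite[Lemma~2.8]{DJ2021} and \cite[p.~735]{FloresTeissier2018} is likewise only a plan until those computations (which the paper performs separately, and then supplements with this formal proof) are actually carried out.
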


\begin{proof} 
Since Whitney's  condition~(b) implies  Whitney's  condition~(a). In the following, we only prove the pair $(V_1\setminus V_2, V_2)$ does not  satisfy
Whitney's  condition~(a).

Let $\boldp = (0, 0, 0, \xi)$ be a point in $V_3$, where
$\xi = (-\sqrt[3]{27/4}) \omega $ and $\omega$ is one of the
cube roots of unity, i.e. $\omega^3=1$.
Consider the sequence of points
\begin{equation*}
\boldp_\epsilon =
(\epsilon, 0, \ccc \, \epsilon^2, \xi)
\text{ where } \epsilon \neq 0
\text{ and } \ccc = (\sqrt[3]{1/2})\omega^2. 
\end{equation*}
Note that $\xi = -3\ccc^2$ and $\ccc^3 = 1/2$. 
For any $\epsilon\neq 0$, we have
$\boldp_\epsilon \in V_1\setminus V_2$ because
\begin{equation*}
\begin{split}
& \epsilon^6 + 0 + \epsilon^4 (\ccc\epsilon^2)\xi + (\ccc\epsilon^2)^3
\\
=\ & (1 + \ccc\xi + \ccc^3) \epsilon^6 \\
=\ & (1 - 3\ccc^3 + \ccc^3) \epsilon^6 \\
=\ & (1-2\ccc^3)=0.
\end{split}
\end{equation*}
Also, $\boldp_\epsilon \rightarrow \boldp$ 
as $\epsilon \rightarrow 0$.
The Jacobian matrix of $f$ is
\begin{equation*}
\begin{split}
\jac{f} &= 
\begin{pmatrix}
\partial f / \partial x, & 
\partial f / \partial y, & 
\partial f / \partial z, & 
\partial f / \partial t  
\end{pmatrix} \\
&=
\begin{pmatrix}
6x^5 + 4x^3 zt, &
6y^5, &
3z^2+x^4 t, &
x^4 z
\end{pmatrix}
\end{split}
\end{equation*}
Evaluating  the  Jacobian
matrix of $f$ at $\boldp_\epsilon=(\epsilon, 0, \ccc  \epsilon^2,\xi), ~ \xi=-3\ccc^2$, we get
\begin{equation*}
\begin{split}
\jac{f}\rvert_{\boldp_\epsilon}
&=
\begin{pmatrix}
6 \epsilon^5 + 4\epsilon^3(\ccc\epsilon^2)\xi , &  0, &
3 (\ccc\epsilon^2)^2 + \epsilon^4\xi,
& \epsilon^4(\ccc\epsilon^2)
\end{pmatrix} \\
&=
\begin{pmatrix}%
6 \epsilon^5 -12 \ccc^3  \epsilon^5 , &  0, &
3 \ccc^2 \epsilon^4+  \epsilon^4 (-3 \ccc^2),
& \ccc\epsilon^6
\end{pmatrix} \\
&=
\begin{pmatrix}
0, & 0, & 0, &  \ccc\epsilon^6
\end{pmatrix}.
\end{split}
\end{equation*}
The tangent space of $V_1$ at $\boldp_\epsilon$, denoted as
$T_{\boldp_\epsilon} V_1$,
is the kernel of $\jac{f}\rvert_{\boldp_\epsilon} $.
Therefore, we have
\[ T_{\boldp_\epsilon} V_1 = \{(x,y,z,t)\in \CC^4 \mid t = 0\},\]
 which
is not related to $\epsilon$.
Let
\[T = \{(x,y,z,t)\in \CC^4 \mid t=0 \},\]
then
\[T_{\boldp_\epsilon}V_1 \rightarrow T  ~\text{as}~
\epsilon\rightarrow 0.\]
Recall that
\[V_2 = \{(0,0,0,t)\in \CC^4\}\]
a linear space, so the tangent space of $V_2$ at $\boldp$ is $V_2$
itself.  Now we have
\[T_{\boldp} V_2 = V_2 \not\subset
T,\]
 which violates Whitney's condition~(a).
\end{proof} 

Furthermore, it is interesting to know that Example~\ref{ex:original_whit}
also provides an example to show that
Whitney stratification is different from the
stratification given by isosingular sets
in \cite{HW2013}.  Let $V_0 = \{(0,0,0,0) \}$ be the origin.
The stratification of $V_1$
given by isosingular sets is
\[\Sigma_S := (V_1\setminus V_2,\ V_2 \setminus V_0,\ V_0),\]
and the minimal Whitney Stratification of $V_1$ is
\[\Sigma_W :=(V_1\setminus V_2,\ V_2 \setminus V_3,\ V_3).\]
The origin  is an isosingular point \cite[Definition 5.1]{HW2013} while all three points in $V_3$ are not.  Moreover, 
the hypersurface $V_1$ fails Zariski's equisingular conditions at the origin  \cite{speder1975equisingularite}.
 Since Zariski equisingularity implies Whitney equisingularity \cite{ParPau2017,speder1975equisingularite}, we know that 
  the hypersurface $V_1$  fails to be Zariski equisingular along $V_2$ at least at  four points including the origin and three points in  $V_3$~(\ref{defv3}).

\end{example}

In Example~\ref{ex:original_whit}, the hypersurface defined by
$f = x^6 + y^6 + x^4 z t + z^3 $ is
in $4$-dimensional space. Let $H_f$ be the hypersurface defined by
$f$ in $\RR^4$. We cannot
draw 
a figure of $H_f$,
however, we can consider the following “slice'' of the
$H_f$:
\begin{equation*}
H_f \cap \{(x,y,z,t)\in \RR^4 \mid y=0\},
\end{equation*}
which is homeomorphic to the surface
\[H_g = \{(x,z,t)\in \RR^3 \mid g = x^6 + x^4 z t + z^3 = 0\}\]
in $3$-dimensional space. 
We can see how the surface $H_g$ looks like around the point
$(0,0,-\sqrt[3]{27/4})$ (see
Figure~\ref{fig:mouse}), which
gives a glimpse of how the hypersurface $H_f$
behaves around the real point $(0,0,0,-\sqrt[3]{27/4})$. We also find that the Jacobian ideal of the polynomial $g = x^6 + x^4 z t + z^3$ is a counterexample to Krull's question. 
\begin{figure}[tbhp]
  \centering
  \includegraphics[width=0.32\textwidth]{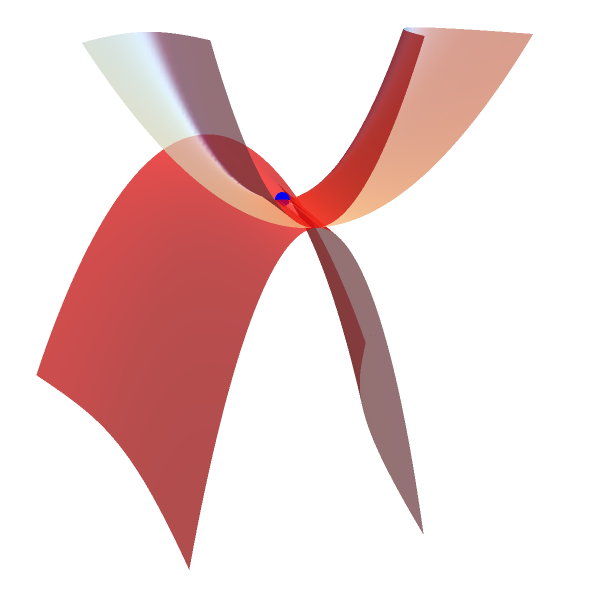} 
  \caption{The surface defined by $g=x^6+x^4zt+z^3$ and the blue
point $(0,0,-\sqrt[3]{27/4})$.}
  \label{fig:mouse} 
\end{figure}

Moving the hyperplane
$\HHH_{t_0} = \{(x,z,t) \in \RR^3 \mid t = t_0 \}$ along
the $t$-axis,
the intersection  $\HHH_{t_0} \cap H_g$ changes as
(see Figure~\ref{fig:y0_ywhitney_yminus2}): 
\begin{equation*}
\HHH_{t_0}\cap H_g =
\left\{
\begin{array}{@{}ll}
\text{ one parabola,}    &  \text{if  } t_0 > -\sqrt[3]{27/4}, \\
\text{ two parabolas,}   &  \text{if  } t_0 =  -\sqrt[3]{27/4}, \\
\text{ three parabolas,} &  \text{if  } t_0 < -\sqrt[3]{27/4},
\end{array}
\right.
\end{equation*}
which also explains why the point 
$(0,0,-\sqrt[3]{27/4})$ is different from 
other points on the $t$-axis. 
\begin{figure}[tbhp]
   \centering
   \subfigure[]{\includegraphics[width=0.32\textwidth]{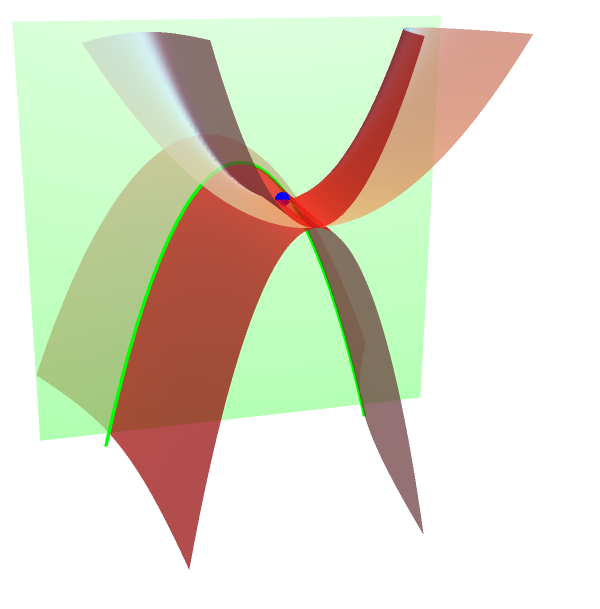}}
   \subfigure[]{\includegraphics[width=0.32\textwidth]{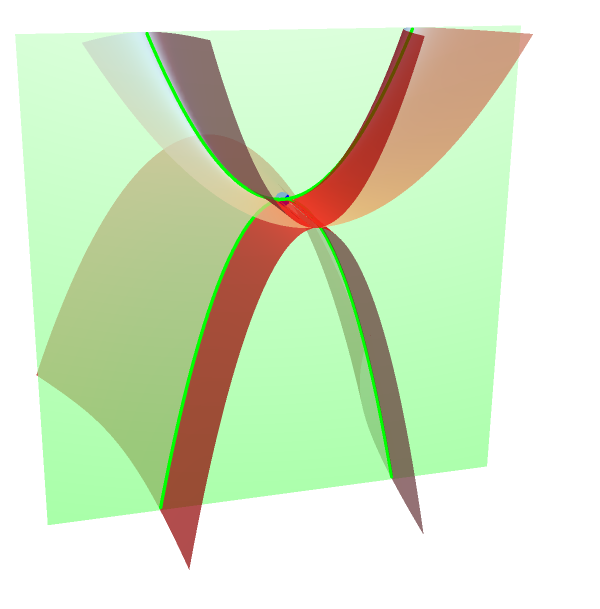}}
   \subfigure[]{\includegraphics[width=0.32\textwidth]{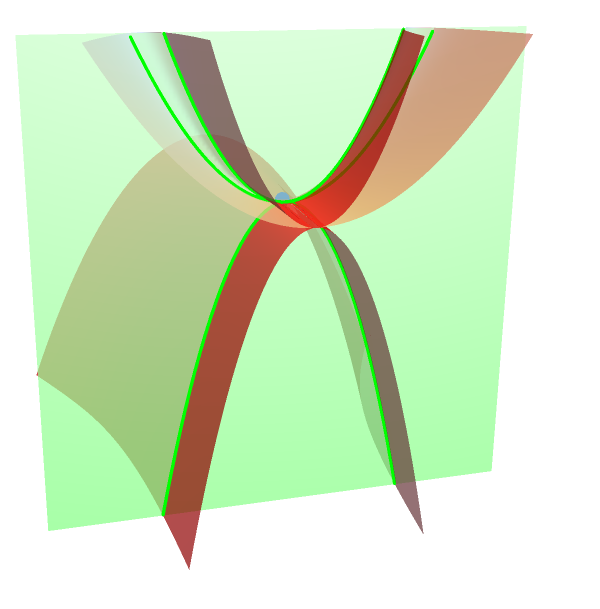}}
\caption{(a) The intersection $H_g\cap \{t=0\}$ consists of one parabola. (b) The
intersection $H_g\cap \{t=-\sqrt[3]{27/4}\}$ consists of two parabolas.
(c) The intersection $H_g\cap \{t=-2 \}$ consists of three parabolas. }
 \label{fig:y0_ywhitney_yminus2}
 \end{figure}



\section{Conclusion}

  We give counterexamples of primary ideals whose integral closures are not primary. Some of our examples are
valid for any characteristics; this fills the gap left  by Huneke's example that works only in positive
characteristics.  Another counterexample is the Jacobian ideal $J$ of $f(x,y,z,t) =  x^6 + y^6 + x^4 z t + z^3$. The hypersurface $V_1$ defined by $f=0$ is given by Brian{\c{c}}on and Speder to show that  Whitney equisingularity  does not imply  Zariski equisingularity for the set germs at the origin.
 Let $V_2= \{(0,0,0,t)\}$ be the singular locus of $V_1$, $\bar{J}$ be the integral closure of $J$, and $V_3= \{(0,0,0,t)\mid 4t^3+27=0 \}$ be the variety of the embedded prime of $\bar{J}$.
 It turns out that the variety $V_3$ contains exactly all the points where the pair $(V_1\setminus V_2, V_2)$ fails Whitney's conditions.  
Moreover, we show that the stratification defined by  isosingular sets is different from Whitney stratification. In particular, the pair $(V_1 \setminus V_2, V_2)$  satisfies  Whitney's conditions at the origin $V_0 = \{(0,0,0,0) \}$, while $V_0$ is  an isosingular point. On the other hand,   all three points in $V_3$  are not isosingular points.  It is interesting to notice that the hypersurface is not Zariski equisingular along its singular locus at both $V_0$ and $V_3$.


%
%
%

\section*{Acknowledgements}
 We would like to express our great thanks to Prof. Huneke and Prof.
Swanson for the discussion on the latest state-of-the-art for the
counterexample in primary ideals regarding the question from Krull.
We also would like to thank Prof. Adam Parusiński
 for pointing out   that Briançon and Speder's 
example concerns
equisingularity of set germs at the origin.
 This work was started during the summer seminar
\emph{Singularities of Differentiable} \emph{Maps} at Shenzhen University in July
2021. Lihong  Zhi is  supported by the National Key Research Project of China 2018YFA0306702 and the National Natural Science Foundation of China 12071467.

\bibliographystyle{abbrv}
\bibliography{ws,zhyang} 

\begin{thebibliography}{10}

\bibitem{BeckerWei1993}
T.~Becker and V.~Weispfenning.
\newblock {\em Gr\"{o}bner bases}, volume 141 of {\em Graduate Texts in
  Mathematics}.
\newblock Springer-Verlag, New York, 1993.

\bibitem{BriSpe1975}
J.~Brian\c{c}on and J.-P. Speder.
\newblock Familles \'{e}quisinguli\`eres de surfaces \`a singularit\'{e}
  isol\'{e}e.
\newblock {\em C. R. Acad. Sci. Paris S\'{e}r. A-B}, 280:Aii, A1013--A1016,
  1975.

\bibitem{FloresTeissier2018}
A.~G. Flores and B.~Teissier.
\newblock Local polar varieties in the geometric study of singularities.
\newblock {\em Ann. Fac. Sci. Toulouse Math. (6)}, 27(4):679--775, 2018.

\bibitem{gaffney1992integral}
T.~Gaffney.
\newblock Integral closure of modules and {W}hitney equisingularity.
\newblock {\em Invent. Math.}, 107:301--322, 1992.

\bibitem{gaffney1996multiplicities}
T.~Gaffney.
\newblock Multiplicities and equisingularity of {ICIS} germs.
\newblock {\em Invent. Math.}, 123:209--220, 1996.

\bibitem{gaffney1999specialization}
T.~Gaffney and S.~L. Kleiman.
\newblock Specialization of integral dependence for modules.
\newblock {\em Invent. Math.}, 137:541--574, 1999.

\bibitem{HW2013}
J.~D. Hauenstein and C.~W. Wampler.
\newblock Isosingular sets and deflation.
\newblock {\em Found. Comput. Math.}, 13:371--403, 2013.

\bibitem{HelmerWhitStrat}
M.~Helmer.
\newblock Compute {Whitney} statifications algebraically.
\newblock Available at \url{http://martin-helmer.com/Software/WhitStrat/},
  2022.

\bibitem{HN2022}
M.~Helmer and V.~Nanda.
\newblock Conormal spaces and {W}hitney stratifications.
\newblock {\em Found. Comput. Math.}, pages 1--36, 2022.

\bibitem{huneke1986primary}
C.~Huneke.
\newblock The primary components of and integral closures of ideals in
  {$3$}-dimensional regular local rings.
\newblock {\em Math. Ann.}, 275:617--635, 1986.

\bibitem{huneke2006integral}
C.~Huneke and I.~Swanson.
\newblock {\em Integral closure of ideals, rings, and modules}, volume 336 of
  {\em London Math. Soc. Lecture Note Ser.}
\newblock Cambridge University Press, Cambridge, 2006.

\bibitem{DJ2021}
S.~T. {\DH}inh and Z.~Jelonek.
\newblock Thom isotopy theorem for nonproper maps and computation of sets of
  stratified generalized critical values.
\newblock {\em Discrete Comput. Geom.}, 65:279--304, 2021.

\bibitem{salam2002integral}
A.~S. Jarrah.
\newblock Integral closures of {C}ohen-{M}acaulay monomial ideals.
\newblock {\em Comm. Algebra}, 30(11):5473--5478, 2002.

\bibitem{krull1936beitrage}
W.~Krull.
\newblock Beitr\"{a}ge zur {A}rithmetik kommutativer {I}ntegrit\"{a}tsbereiche.
\newblock {\em Math. Z.}, 41(1):545--577, 1936.

\bibitem{ParPau2017}
A.~Parusi\'{n}ski and L.~P\u{a}unescu.
\newblock Arc-wise analytic stratification, {W}hitney fibering conjecture and
  {Z}ariski equisingularity.
\newblock {\em Adv. Math.}, 309:254--305, 2017.

\bibitem{shimoyama96}
T.~Shimoyama and K.~Yokoyama.
\newblock Localization and primary decomposition of polynomial ideals.
\newblock {\em J. Symbolic Comput.}, 22(3):247--277, 1996.

\bibitem{speder1975equisingularite}
J.-P. Speder.
\newblock \'{E}quisingularit\'{e} et conditions de {W}hitney.
\newblock {\em Amer. J. Math.}, 97(3):571--588, 1975.

\bibitem{teissier88limites}
L.~D. {u}ng Tr\'{a}ng and B.~Teissier.
\newblock Limites d'espaces tangents en g\'{e}om\'{e}trie analytique.
\newblock {\em Comment. Math. Helvetici}, 63(4):540--578, 1988.

\bibitem{villamayor2000equiresolution}
O.~Villamayor~U.
\newblock On equiresolution and a question of {Z}ariski.
\newblock {\em Acta Math.}, 185(1):123--159, 2000.

\bibitem{wall10gaffney}
C.~T.~C. Wall.
\newblock Gaffney's work on equisingularity.
\newblock In {\em Real and complex singularities}, volume 380 of {\em London
  Math. Soc. Lecture Note Ser.}, pages 350--377. Cambridge Univ. Press,
  Cambridge, 2010.

\bibitem{zariski1971some}
O.~Zariski.
\newblock Some open questions in the theory of singularities.
\newblock {\em Bull. Amer. Math. Soc.}, 77(4):481--491, 1971.

\bibitem{zariski1977elusive}
O.~Zariski.
\newblock The elusive concept of equisingularity and related questions.
\newblock In {\em Algebraic geometry ({J}. {J}. {S}ylvester {S}ympos., {J}ohns
  {H}opkins {U}niv., {B}altimore, {M}d., 1976)}, pages 9--22. Johns Hopkins
  Univ. Press, Baltimore, Md., 1977.

\end{thebibliography}


\appendix 

\section{}
 
 \subsection{}\label{app:m1}
\begin{center}
{\ttfamily 
\renewcommand{\arraystretch}{1.2}
\begin{tabular}{l@{\hspace{0.5em}}l}
i1 : & R = QQ[x, y, z]; \\
i2 : & I = ideal (x\^{}3,  y\^{}3,  x\^{}2 * y,  x\^{}2 * z - x * y\^{}2); \\
i3 : & isPrimary (I) \\
o3 = & true \\
i4 : & primaryDecomposition (integralClosure (I)) \\
o4 = & \{ideal($x^2,\ y^3,\ xy^2$), ideal($z,\ y,\ x^3$)\} \\
\end{tabular}
}
\end{center}
Here, ``\texttt{QQ}'' is the rational number field $\QQ$,
and 
  one can replace  $\QQ$
by  other computational fields of positive characteristics available in Macaulay2.


\subsection{}\label{app:m3}
\begin{center}
{\ttfamily
\renewcommand{\arraystretch}{1.2}
\begin{tabular}{l@{\hspace{0.5em}}l}
i1 : & R = QQ[x, y, z, t]; \\
i2 : & I = ideal (x\^{}6 + y\^{}6 + x\^{}4 * z * t + z\^{}3); \\
i3 : & J =  trim ideal ( jacobian (I) ) \\ 
o3 = & ideal($x^4 t + 3z^2,\ x^4z,\ y^5,\ 3x^5 + 2 x^3 z t $) \\
i4 : & associatedPrimes (J) \\
o4 = & \{ideal ($z,\ y,\ x$)\} \\
i5 : & associatedPrimes ( integralClosure (J) ) \\
o5 = & \{ideal ($z,\ y,\ x$), ideal ($z,\ y,\ x,\ 4t^3  + 27$)\}
\end{tabular}
}
\end{center}

\section{}
\label{sec:append}

\begin{defn}\cite[Definition~2.3]{shimoyama96}
\label{def:pseudo}
An ideal $I$ of a polynomial ring over $\QQ$ is called a
pseudo-primary ideal if $\sqrt{I}$ is a prime ideal.
\end{defn}

\begin{prop}
{\upshape \cite[Proposition~2.11]{shimoyama96}}
\label{prop:pseudo_primary}
Let $I$ be a pseudo-primary ideal in $\QQ[x_1, \ldots, x_n]$ with
$\sqrt{I}=P$ and let $Q$ be the primary component of $I$ with
$\sqrt{Q}=P$. Suppose that a subset $U$ of $\{x_1, \ldots, x_n \}$ is
a maximally independent set modulo $P$. Then
$Q = I \QQ(U)[\{x_1,\ldots,x_n\}\setminus U] $.
\end{prop}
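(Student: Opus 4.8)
The plan is to identify the extension $I\QQ(U)[\{x_1,\ldots,x_n\}\setminus U]$ with a localization of $I$ and to prove that this localization isolates the $P$-primary component $Q$ while annihilating every embedded component. Write $R = \QQ[x_1,\ldots,x_n]$ and let $S = \QQ[U]\setminus\{0\}$, a multiplicative subset of $R$. Then $S^{-1}R = \QQ(U)[\{x_1,\ldots,x_n\}\setminus U]$ and $I\QQ(U)[\{x_1,\ldots,x_n\}\setminus U] = S^{-1}I$, so the asserted identity is to be read after contracting back to $R$; concretely I would prove $Q = S^{-1}I \cap R$.

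First I would extract from the maximal independence of $U$ modulo $P$ the two facts I need: that $P \cap \QQ[U] = 0$, whence $P \cap S = \emptyset$, and that $|U| = \dim(R/P)$. The crucial consequence is that $PS^{-1}R$ is a \emph{maximal} ideal of $S^{-1}R$. Indeed $S^{-1}R / PS^{-1}R \cong S^{-1}(R/P)$ is a finitely generated $\QQ(U)$-algebra that is a domain of Krull dimension $\dim(R/P) - |U| = 0$, hence a field (a finitely generated domain of dimension zero over a field is a field).

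Next, since $I$ is pseudo-primary with prime radical $P$, the prime $P$ is the unique minimal, hence the unique isolated, associated prime of $I$; so in an irredundant primary decomposition $I = Q \cap Q_1 \cap \cdots \cap Q_r$ the factor $Q$ is $P$-primary and every $Q_i$ is $P_i$-primary with $P_i \supsetneq P$. I would then show each embedded prime meets $S$: were $P_i \cap S = \emptyset$, the extension $P_iS^{-1}R$ would be a proper prime containing the maximal ideal $PS^{-1}R$, forcing $P_iS^{-1}R = PS^{-1}R$ and, upon contraction, $P_i = P$ --- a contradiction. Since $\sqrt{Q_i} = P_i$ meets the multiplicatively closed set $S$, already $Q_i$ meets $S$, so $S^{-1}Q_i = S^{-1}R$ and this component vanishes. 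By contrast $\sqrt{Q} = P$ avoids $S$, so $S^{-1}Q$ is a proper $PS^{-1}R$-primary ideal with $S^{-1}Q \cap R = Q$.

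Finally I would combine these observations. Localization commutes with finite intersection, so
\[ S^{-1}I = S^{-1}Q \cap \bigcap_{i=1}^{r} S^{-1}Q_i = S^{-1}Q, \]
and contracting to $R$ yields $S^{-1}I \cap R = S^{-1}Q \cap R = Q$, as required. The main obstacle is the maximality of $PS^{-1}R$: this is precisely where the hypothesis that $U$ is maximally independent modulo $P$ enters, via the dimension count $\dim S^{-1}(R/P) = 0$ and Zariski's lemma. Once that is established, separating the $P$-primary component from the embedded ones is a routine application of the localization behavior of primary decompositions.
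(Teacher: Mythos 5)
The paper states this proposition only as a citation to \cite[Proposition~2.11]{shimoyama96} and gives no proof of its own, so there is no in-paper argument to diverge from; your proof is correct and is essentially the standard localization argument of the cited source: identifying $I\QQ(U)[\{x_1,\ldots,x_n\}\setminus U]$ with $S^{-1}I$ for $S=\QQ[U]\setminus\{0\}$, using maximal independence of $U$ to get $\dim S^{-1}(R/P)=0$ and hence maximality of $PS^{-1}R$, and then killing every embedded component since each $P_i\supsetneq P$ must meet $S$. You also correctly read the stated equality as holding after contraction to $\QQ[x_1,\ldots,x_n]$, which is the sense in which the paper applies it in Lemma~\ref{lem:saturation}.
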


\begin{prop}
{\upshape \cite[Proposition~8.94]{BeckerWei1993}} 
\label{prop:beckerwei} 
Let $\kk$ be a field, $I$ be an ideal of
$\kk[x_1,\ldots,x_n]$, 
$U$ be any subset of $\{x_1,\ldots,x_n\}$, and $G$ be a \groebner basis of
$I$ with respect to
an inverse block order $<$ in $\{x_1,\ldots,x_n\}$ such that $U \ll
\{x_1,\ldots,x_n\}\setminus U$.
Set
\begin{equation*}
f=\lcm\{\lcu{g}\mid g\in G\},
\end{equation*}
where $\lcu{g}\in \kk[U] $ is the leading coefficient of $g$ as an
element in $k(U)[\{x_1,\ldots,x_n \}\setminus U ]$ with respect to
the restriction $<^{\prime}$ of $<$ to
$\{x_1,\ldots,x_n\}\setminus U$. Then
$I k(U)[\{x_1,\ldots,x_n\}\setminus U]= I : f^\infty  $.
\end{prop}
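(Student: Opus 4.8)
The plan is to read the displayed equality as an equality of ideals in $R := \kk[x_1,\ldots,x_n]$, in which the left-hand side $I\,\kk(U)[\{x_1,\ldots,x_n\}\setminus U]$ denotes the contraction to $R$ of the extension of $I$ under the localization $R \to S^{-1}R = \kk(U)[W]$ at $S = \kk[U]\setminus\{0\}$, where I abbreviate $W := \{x_1,\ldots,x_n\}\setminus U$. This is the same identification used implicitly in Proposition~\ref{prop:pseudo_primary}. With this reading, the standard description of the contraction of an extended ideal under localization gives
\[
I\,\kk(U)[W]\cap R = \{\,g\in R : sg\in I \text{ for some } s\in\kk[U]\setminus\{0\}\,\}.
\]
Since $f=\lcm\{\lcu{g}\mid g\in G\}$ is a nonzero element of $\kk[U]$ (each $\lcu{g}\neq 0$), it is a unit in $\kk(U)$, so one inclusion is immediate: if $f^m g\in I$ then $g=f^{-m}(f^m g)\in I\,\kk(U)[W]$, whence $I:f^\infty \subseteq I\,\kk(U)[W]\cap R$. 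The whole content of the proposition is therefore the reverse inclusion, which I would obtain by controlling denominators in a division carried out over $\kk(U)[W]$.

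The key step I would establish first is that $G$ remains a \groebner basis of the extended ideal $I\,\kk(U)[W]$ with respect to $<'$. This is exactly where the hypothesis that $<$ is an inverse block order with $U\ll W$ is used. For any $p\in R$, writing $p=\sum_\alpha c_\alpha(U)\,W^\alpha$ with $c_\alpha\in\kk[U]$, the condition $U\ll W$ forces the $<$-leading monomial of $p$ to have $W$-component equal to the largest $W$-monomial occurring in $p$; that is, the $W$-part of $\textsc{lm}_{<}(p)$ equals $\textsc{lm}_{<'}(p)$, and the $\kk[U]$-coefficient of that top $W$-monomial is precisely $\lcu{p}$. Now take any nonzero $h\in I\,\kk(U)[W]$; clearing denominators gives $sh\in I\subseteq R$ for some $s\in\kk[U]\setminus\{0\}$, with $\textsc{lm}_{<'}(sh)=\textsc{lm}_{<'}(h)$ since $s$ is a unit in $\kk(U)$. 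Because $G$ is a \groebner basis of $I$ in $R$, $\textsc{lm}_{<}(sh)$ is divisible by $\textsc{lm}_{<}(g)$ for some $g\in G$; passing to $W$-components and invoking the block-order compatibility just noted, $\textsc{lm}_{<'}(h)$ is divisible by $\textsc{lm}_{<'}(g)$. Hence the $<'$-leading monomials of $G$ generate the initial ideal of $I\,\kk(U)[W]$, which is the statement that $G$ is a \groebner basis over $\kk(U)[W]$.

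With this in hand I would finish as follows. Let $g\in I\,\kk(U)[W]\cap R$, so in particular $g\in I\,\kk(U)[W]$. Dividing $g$ by $G$ in $\kk(U)[W]$ with respect to $<'$ yields remainder $0$, because $G$ is a \groebner basis and $g$ lies in the ideal; thus $g=\sum_{g_i\in G} h_i\,g_i$ with $h_i\in \kk(U)[W]$. In the division algorithm every reduction step divides only by a leading coefficient $\lcu{g_i}$, so each $h_i$ acquires a denominator that is a product of powers of the $\lcu{g_i}$. Since every $\lcu{g_i}$ divides $f=\lcm\{\lcu{g}\}$, multiplying through by a sufficiently high power $f^m$ clears all denominators and places $f^m h_i\in R$. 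Then
\[
f^m g=\sum_i (f^m h_i)\,g_i\in I,
\]
so $g\in I:f^\infty$, which establishes the reverse inclusion and completes the proof.

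I expect the main obstacle to be the step that keeps $G$ a \groebner basis after localization: one must verify precisely that the inverse block order with $U\ll W$ makes the $<'$-leading $W$-monomial of every element agree with the $W$-part of its $<$-leading monomial, and that this compatibility transports the initial-ideal equality from $R$ to $\kk(U)[W]$. Once that is in place, the denominator bookkeeping in the division algorithm is routine, since each reduction divides only by some $\lcu{g_i}$ and each $\lcu{g_i}$ divides $f$.
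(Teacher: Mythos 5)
The paper gives no proof of this proposition at all --- it is quoted verbatim from Becker--Weispfenning \cite[Proposition~8.94]{BeckerWei1993} and used as a black box in Lemma~\ref{lem:saturation} --- so there is no in-paper argument to compare against; your proposal must stand on its own, and it does. Writing $W=\{x_1,\ldots,x_n\}\setminus U$, your reading of the left-hand side as the contraction $I\,\kk(U)[W]\cap \kk[x_1,\ldots,x_n]$ is the correct (and the paper's implicit) interpretation, the inclusion $I:f^\infty\subseteq I\,\kk(U)[W]\cap \kk[x_1,\ldots,x_n]$ is immediate as you say, and your two key steps for the reverse inclusion both check out. For step one: since $<$ is a block order with $U\ll W$, the $W$-part of $\LM{p}$ for $p\in\kk[x_1,\ldots,x_n]$ is exactly its $<'$-leading monomial over $\kk(U)$, divisibility of monomials restricts to divisibility of their $W$-parts, and clearing denominators from $h\in I\,\kk(U)[W]$ gives $sh\in I$ with $s\in\kk[U]\setminus\{0\}$ a unit of $\kk(U)$, so $G$ indeed remains a \groebner basis of the extended ideal with respect to $<'$; this is the standard argument and is essentially how the textbook proceeds. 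For step two, the denominator bookkeeping is the one point I would spell out as an actual induction: starting from $g$ with coefficients in $\kk[U]$, each reduction step divides only by some $\lcu{g_i}$, so all intermediate coefficients and hence all quotients $h_i$ lie in the subring $\kk[U]\bigl[\lcu{g_1}^{-1},\lcu{g_2}^{-1},\ldots\bigr][W]$, and since each $\lcu{g_i}$ divides $f$, a single sufficiently large power $f^m$ clears every denominator at once, giving $f^m g=\sum_i(f^m h_i)g_i\in I$. (Note that a product $\prod_i \lcu{g_i}^{a_i}$ divides $f^{\sum_i a_i}$, so the $\lcm$ in the definition of $f$ is exactly what makes one uniform power suffice.) In short: the proof is correct, complete in outline, and recovers the cited result by the standard localization-plus-\groebner-basis route.
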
 


\begin{lem}
\label{lem:saturation} 
Let $R=\kk[x,y,z]$, $n\in \ZZ_{+}$, and $\III = \ideal{x^3, y^3,
x^2y,x^2z^n-xy^2
}\subset R$ be an ideal, $\PPP=\sqrt{\III} = \ideal{x,y}$. Let $\qqq$
be the primary component of $\III$ with $\sqrt{\qqq}=\PPP$. Then
\begin{equation*}
\qqq=\III : z^\infty 
\end{equation*}
where $\III : z^\infty 
= \{g\in \kk[x,y,z] \mid z^\mmm g \in \III
\text{ for some } \mmm \}$.
\end{lem}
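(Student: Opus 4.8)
The plan is to identify $\qqq$ through the localization/saturation description of the primary component at a minimal prime, combining Proposition~\ref{prop:pseudo_primary} with Proposition~\ref{prop:beckerwei}. First I would record that $\sqrt{\III}=\PPP=\ideal{x,y}$ is prime, so $\III$ is pseudo-primary in the sense of Definition~\ref{def:pseudo}, and $\PPP$ is in fact the unique minimal prime of $\III$, which makes the $\PPP$-primary component $\qqq$ well defined. Since $R/\PPP\cong\kk[z]$ has Krull dimension one and $\PPP\cap\kk[z]=0$, the singleton $\UUU=\{z\}$ is a maximally independent set modulo $\PPP$; this is the natural (and essentially forced) choice of variable to saturate against.

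With $\UUU=\{z\}$ fixed, Proposition~\ref{prop:pseudo_primary} expresses the primary component as the contraction $\qqq=\III\,\kk(z)[x,y]\cap R$ of the extension of $\III$ to $\kk(z)[x,y]$. The remaining task is to rewrite this extension-contraction as the explicit saturation $\III:z^\infty$, which is exactly what Proposition~\ref{prop:beckerwei} computes once a suitable \groebner basis is in hand.

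So next I would invoke Proposition~\ref{prop:beckerwei} with the same $\UUU=\{z\}$ and the lexicographic order $x>y>z$, which is an inverse block order satisfying $\{z\}\ll\{x,y\}$ because its comparison factors as lex on $x,y$ followed by the $z$-exponent. By Claim~1 the set $\GGG=\{x^3,\,y^3,\,x^2y,\,x^2z^n-xy^2\}$ is already a \groebner basis of $\III$ for this order, so no new Buchberger computation is needed. Viewing each generator in $\kk(z)[x,y]$, the monomials $x^3,y^3,x^2y$ have $\lcu{\cdot}=1$, while for $x^2z^n-xy^2$ the leading monomial in $x,y$ is $x^2$ (since $x^2>xy^2$ in lex $x>y$), whence $\lcu{x^2z^n-xy^2}=z^n$. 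Therefore $f=\lcm\{1,1,1,z^n\}=z^n$, and Proposition~\ref{prop:beckerwei} identifies the contraction $\III\,\kk(z)[x,y]\cap R$ with $\III:f^\infty=\III:(z^n)^\infty=\III:z^\infty$. Combining this with the previous paragraph yields $\qqq=\III:z^\infty$.

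The step I expect to require the most care is the correct reading of the two cited propositions as statements about extension-contraction ideals back in $R$, together with the leading-coefficient computation for $x^2z^n-xy^2$ under the block order: one must confirm that $x^2$ rather than $xy^2$ dominates in $\kk(z)[x,y]$, so that $z^n$ (and not a unit) enters $f$, since this single coefficient is what produces the saturation by $z$. A secondary point worth a brief remark is that Proposition~\ref{prop:pseudo_primary} is stated over $\QQ$, whereas the lemma is over an arbitrary field $\kk$; I would note that the independent-set description of the minimal primary component is valid over any field, so the argument transfers verbatim.
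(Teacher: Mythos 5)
Your proposal is correct and follows essentially the same route as the paper's own proof: observe $\III$ is pseudo-primary with $\UUU=\{z\}$ a maximally independent set modulo $\PPP$, apply Proposition~\ref{prop:pseudo_primary} to get $\qqq=\III\,\kk(z)[x,y]$ (contracted to $R$), then use the \groebner basis $\GGG$ under lex $x>y>z$ and Proposition~\ref{prop:beckerwei} with $f=\lcm\{1,1,1,z^n\}=z^n$ to conclude $\qqq=\III:(z^n)^\infty=\III:z^\infty$. Your closing remark about transferring the $\QQ$-statements to an arbitrary field $\kk$ is exactly the caveat the paper itself records at the start of its proof, so nothing is missing.
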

\begin{proof}
Definition~\ref{def:pseudo} and  
the proof of Proposition~\ref{prop:pseudo_primary} 
are valid for polynomial rings over any field. 
$\III$ is a pseudo-primary ideal since
$\PPP =\sqrt{\III} = \ideal{x,y}$ is a prime ideal. 
The set $U=\{z\}$ is a maximally independent set modulo $\PPP$.
By Proposition~\ref{prop:pseudo_primary}, $\qqq =
\III k(z)[x,y]$. 

The set of generators $\GGG = \{x^3, y^3, x^2y,x^2z^n-xy^2 \}$ is a
\groebner basis of $\III$ with respect to the lexicographic order
$x>y>z$.  The leading coefficients 
of the polynomials in  $ \GGG $ as
elements in $k(z)[x,y] $ are 
\begin{equation*}
\lcu{x^3}= \lcu{y^3} = \lcu{x^2y} = 1,\  \lcu{x^2z^n-xy^2} = z^n.
\end{equation*}
Therefore $z^n = \lcm\{\lcu{g} \mid g \in \GGG \}$.
By Proposition~\ref{prop:beckerwei},
$ \qqq = \III : (z^n)^\infty =  \III : z^\infty $. 
\end{proof}



\hspace{0.5cm}

%
%
%
%
%
%
%
%

College of Mathematics and Statistics, Shenzhen
University, Shenzhen, Guangdong 518060, China \quad \emph{E-mail address}: \url{nan.li@szu.edu.cn};  \url{zhyang@szu.edu.cn}

KLMM, Academy of Mathematics and Systems Science, Chinese
Academy of Sciences, Beijing 100190, China
 \quad
\emph{E-mail address}: \url{lizijia@amss.ac.cn}


KLMM, Academy of Mathematics and Systems Science, Chinese
Academy of Sciences, Beijing 100190, China, University of Chinese Academy of Sciences, Beijing 100049, China

\emph{E-mail address}: \url{lzhi@mmrc.iss.ac.cn}

\end{document}